\documentclass[reqno]{amsart}
\allowdisplaybreaks[1]
\usepackage{mathtools} \mathtoolsset{showonlyrefs}
\usepackage{enumitem}

\usepackage{setspace}
\setdisplayskipstretch{1}
\usepackage[T1]{fontenc}
\usepackage{lmodern}

\usepackage{amsmath,amsfonts,amsbsy,amsgen,amscd,mathrsfs,amssymb,amsthm}
\usepackage{bm,bbm}

\usepackage[colorlinks=true,
            linkcolor=blue,
            citecolor=blue,
            urlcolor=blue]{hyperref}

\usepackage{etoolbox} %Clever Refencing inside enumerate environment.
\usepackage[capitalize,noabbrev]{cleveref}

\usepackage{tikz}
\theoremstyle{plain}
\newtheorem{theorem}{Theorem}[section]
\newtheorem{lemma}[theorem]{Lemma}

\newtheorem{conjecture}[theorem]{Conjecture}

\theoremstyle{definition}

\theoremstyle{remark}

\newcommand{\abs}[1]{\ensuremath{\left\lvert #1 \right\rvert}}

\newcommand{\floor}[1]{\ensuremath{\left\lfloor #1 \right\rfloor}}

\DeclareMathOperator{\Span}{span}

\title{Matroid flat counts are not unimodal}
\author{Alexander Divoux}
\address{Program in Applied and Computational Mathematics, Princeton University}
\email{adivoux@princeton.edu}
\author{Chayim Lowen}
\address{Department of Mathematics, Princeton University}
\email{chayiml@princeton.edu}
\author{Shouda Wang}
\address{Program in Applied and Computational Mathematics, Princeton University}
\email{shoudawang@princeton.edu}
\subjclass[2020]{Primary 05B35}
\keywords{Unimodality, Rota's conjecture, Whitney number}
\begin{document}
\begin{abstract}
    We give counterexamples to Rota's 1970 conjecture that the sequence counting flats of varying rank in a matroid is unimodal. More specifically, inspired by Larson's recent disproof of the stronger log-concavity conjecture of Mason, we explain a mechanism which turns failures of log-concavity for flats into failures of unimodality under suitable conditions.
\end{abstract}

\maketitle

\section{Introduction}
Let $M$ be a matroid of rank $r$. For $0 \le i \le r$, let $W_i$ denote the number of rank $i$ flats in $M$. A long-standing conjecture of Rota \cite{rota70} asserts that the sequence $W_0,\dots, W_r$ is unimodal.
\begin{conjecture}
\label[conjecture]{conj}
    The sequence $(W_i)_{0\leq i\leq r}$ is unimodal for every matroid. That is, there exists an integer $0 \le k \le r$ such that 
    $$
    W_0\leq \cdots \leq W_{k-1}\leq  W_k \geq W_{k+1}\geq \cdots \geq W_r.
    $$
\end{conjecture}
A strengthening due to Mason \cite{mason72} proposed that $(W_i)_i$ should in fact be log-concave. Rota and Mason's conjectures have attracted considerable attention, see, for example, Conjectures 8.2.4(i)–(ii) in \cite{aigner87}, Conjectures 15.2.3 and 15.2.4(i) in \cite{Oxley}, and Problem 25(c) in \cite{stanley00}. The past decade has seen the resolution of several closely related conjectures on the unimodality and log-concavity of sequences coming from matroids, due to Heron, Rota, Welsh, and Mason \cite{heron1972matroid,rota70,welsh1976matroid,mason72}. The breakthrough works \cite{huh12a, huh12b, Huh18,huh20,anari2024logconcave} that resolved these questions have given birth to the beautiful theories of combinatorial Hodge theory and Lorentzian polynomials. All of this powerful machinery, however, has shed little light on  Rota's or Mason's conjectures for flats.

Rota's and Mason's conjectures for flats are known to hold for some special classes of matroids. Mason's conjecture was proven for braid matroids in \cite{Harper} and \cite{lieb}, and Rota's conjecture for perfect matroid designs in \cite{Y70}. The special case of Mason's conjecture at $W_2$ asserts that $W_2^2\ge W_1W_3$, and is known as the points-lines-planes conjecture. A stronger inequality was proved by Stonesifer \cite{Stone75} for graphic matroids, and was later extended by Seymour \cite{seymour} to matroids in which every rank two flat has size at most 4. The same inequality was verified for all matroids of rank 4 on at most 11 elements in \cite{Matsumoto}.

Substantial progress on \Cref{conj} was made by Huh and Wang \cite{Huh1}, who showed for realizable matroids that unimodality holds for the first half of the sequence. More precisely, they proved that any realizable matroid of rank $r$ satisfies
\[
    W_0 \leq W_1 \leq \dots \leq W_{\floor{r/2}}.
\]
The same inequalities were proven for all matroids in \cite{huh2} by a difficult inductive argument. A significant simplification was later achieved in \cite{AHL25}.

Recently, Larson \cite{Larson26} disproved Mason's conjecture by showing that the graphic matroids of suitable generalized theta graphs violate log-concavity at the tail of the sequence. In the present paper, we disprove Rota's original unimodality conjecture.
\begin{theorem}\label{thm:main}
    Conjecture \ref{conj} is false.
\end{theorem}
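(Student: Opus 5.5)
The plan is to exhibit an explicit matroid whose flat sequence is not unimodal. Following the abstract, I would start from Larson's counterexamples to Mason's conjecture \cite{Larson26}: graphic matroids $M(G)$ of generalized theta graphs $G$, consisting of two vertices joined by many internally disjoint paths. The flats of a graphic matroid correspond to partitions of $V(G)$ into blocks, each inducing a connected subgraph; rank $r-j$ corresponds to $j+1$ blocks. For a theta graph the top of the sequence, $W_r=1,\ W_{r-1},\ W_{r-2},\ W_{r-3},\dots$, can be written as explicit sums over how the cuts are distributed among the paths. These expressions are polynomials in the path lengths $\ell_1,\dots,\ell_k$ and in the number $k$ of paths.

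The first step is to compute $W_{r-1},W_{r-2},W_{r-3}$ in closed form for a two-parameter family, for instance $k$ paths each of length $\ell$, possibly together with a few extra short paths. The aim is to see which terms dominate as $k,\ell\to\infty$ at tuned relative rates. The log-concavity failure comes from splitting off long paths: once the two hub vertices are separated, each additional cut inside a path creates many new flats. This makes the ratios $W_{r-j-1}/W_{r-j}$ increase in $j$ near the top, rather than decrease as log-concavity requires.

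The second step is the mechanism that upgrades this increase of ratios to an honest dip. An increase of ratios alone gives only $W_{r-1}W_{r-3}>W_{r-2}^2$. To get non-unimodality I would modify $M$ so as to inflate one level while leaving the next level comparatively small. Concretely, I would replace the hub structure, for example by adding many parallel short paths or a large parallel class between the hubs, so that the number of hyperplanes $W_{r-1}$ becomes large. The configurations counted by $W_{r-2}$, which require an extra cut that is cheap only on long paths, should not grow proportionally. Meanwhile $W_{r-3}$, which has two extra cuts, should grow quadratically in $\ell$. The target is parameters with
\[
W_{r-1} > W_{r-2} < W_{r-3},
\]
which violates unimodality at once: the sequence must then have a peak at or above level $r-1$ on one side of level $r-2$ and a larger value below it. Since $W_r=1$, this is consistent only if the decrease occurs at $r-1\to r-2$. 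If this direct design fails, my fallback is to take direct sums $M\oplus N$, whose flat sequence is the convolution $W(M)*W(N)$. I would choose $N$ (for example a uniform matroid or a further theta graph) so that the convolution concentrates the log-concavity defect into a single descent followed by an ascent.

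The main obstacle is the quantitative balancing in the second step. The added structure that inflates $W_{r-1}$ also feeds $W_{r-2}$ and $W_{r-3}$, and naive choices keep the sequence unimodal, merely less log-concave. I expect to handle this by taking an asymptotic regime such as $k\to\infty$ with $\ell$ growing like a suitable power of $k$. I would extract the leading terms of the three counts in that regime and choose exponents so that the leading behaviour forces $W_{r-2}$ below both neighbours. Finally I would verify one explicit small instance by exact computation of the three flat counts, which proves \Cref{thm:main}.
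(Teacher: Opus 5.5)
\textbf{There is a genuine gap.} Your second step is where the whole difficulty lies, and the mechanism you sketch there cannot work.

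First, adding a large parallel class between the hubs changes nothing: parallel elements do not affect the lattice of flats, so every $W_i$ stays the same.

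More fundamentally, your target $W_{r-1}>W_{r-2}<W_{r-3}$ is impossible in \emph{every} graphic matroid. All your modifications (extra short paths, parallel edges, direct sums of graphs) stay graphic, so they cannot reach it. To see this, let $X$ be a corank-$2$ flat. Then $M/X$ is a rank-$2$ graphic matroid, so it has at most three points, because $U_{2,4}$ is not graphic. Hence $X$ lies in at most $3$ hyperplanes. On the other hand, every hyperplane $H$ contains at least $r-1$ corank-$2$ flats, namely the hyperplanes of $M|H$. Double counting gives $3W_{r-2}\ge (r-1)W_{r-1}$. So $W_{r-2}\ge W_{r-1}$ once $r\ge 4$, and for $r=3$ your target would need $W_0>W_1$.

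Your direct-sum fallback also comes with no mechanism, and with $N$ uniform it provably fails. Here $W(N)$ is log-concave, and the convolution of a unimodal sequence with a log-concave one is unimodal. So if $W(M)$ is unimodal (otherwise you would already be done), then $W(M\oplus N)=W(M)*W(N)$ is unimodal too.

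\textbf{The missing idea.} You need an operation that acts on flat counts approximately as the exponential tilt $W_i\mapsto c^iW_i$. A positive sequence is log-concave exactly when all its tilts are unimodal, so a large enough log-concavity defect is thereby converted into a dip. The paper gets this tilt from Whittle's $q$-lift: for $M$ simple and $\mathbb F_q$-representable, the lift $M'$ satisfies $W_i(M')=q^iW_i(M)+W_{i-1}(M)$. The theta graph matroids are regular, so every prime $q$ is available.

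Your first step, reading the log-concavity defect off the top flats of theta graphs, is in the spirit of the paper's lemma. There one takes $M=G_t$ and $k=r(M)-2$, and shows $W_{k-2}>W_{k-1}>W_k>W_{k+1}$ with $W_{k-1}W_{k+1}/W_k^2=\Theta(t)$. In particular the ratio exceeds $4$ for large $t$.

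The paper then chooses a prime $p\in(\tfrac12 W_{k-1}/W_k,\,W_{k-1}/W_k)$ by Bertrand's postulate. With this choice the $p$-lift satisfies $W'_{k-1}>W'_k<W'_{k+1}$. The resulting matroid has $(p+1)$-point lines with $p\ge 3$, so it is not graphic. Its dip sits at corank $3$, which is consistent with the obstruction above. Without some such tilting operation, you have no way to turn $W_{k-1}W_{k+1}>W_k^2$ into a failure of unimodality.
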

The construction of our counterexamples starts with matroids which violate the log-concavity of flats to a large degree, then boosts this deficiency to a failure of unimodality. To accomplish this, we apply the $q$-lift construction introduced by Whittle \cite{Whittle89} to Larson's generalized theta graph matroids, which provide us with many strong violations of log-concavity.

\subsection*{Overview of later sections.}
In \cref{sec:prelim}, we review notation and give some intuition for how our construction works. In \cref{sec:lcc}, we review Larson's construction of matroids which violate Mason's conjecture. In \Cref{sec:q_lifts}, we describe the $q$-lift construction and compute its effect on flat counts. Finally, in \Cref{sec:proof_of_main}, we combine the results of these two sections to prove \Cref{thm:main}.

\section{Preliminaries}\label{sec:prelim}
We refer the reader to \cite{Oxley,White} for background on matroids.
For a matroid $M$, we write $E(M)$ for the ground set of $M$ and $r_M$ for the rank function of $M$.
Recall that a \emph{flat} of $M$ of rank $i$ is a maximal subset of $E(M)$ with rank $i$.
A \emph{cyclic set} of $M$ is any union of circuits of $M$.
The cyclic sets of $M$ are precisely the complements of the flats of the dual matroid $M^*$ and vice versa. 
 The rank of a subset $S \subseteq E(M)$ in the dual matroid $M^*$ can be expressed via the rank function of $M$, and is given by
\[
    r_{M^*}(S) = r_M(E \setminus S) + \abs{S} - r_M(E).
\]
We write $W_i(M)$ for the number of flats of $M$ of rank $i$, known as the $i$-th \emph{Whitney number of the second kind}. 
When there is no risk of ambiguity, we will sometimes write $E$ in place of $E(M)$ and  $W_i$ in place of $W_i(M)$. We will also write $r$ or $r(M)$ in place of 
$r_M(E)$.

We follow standard asymptotic notation as set out in \cite{Knuth}. For functions $f,g$ on $\mathbb N$ valued in the nonnegative reals, we write $f = O(g)$ if there exist a positive constant $C$ and an integer $n_0$ such that $f(n) \le Cg(n)$ for all $n \ge n_0$. We write $f = \Omega(g)$ to mean $g = O(f)$. The notation $f = \Theta(g)$ means that $f = O(g)$ and $g = O(f)$ hold simultaneously.

A sequence $a_0,\dots, a_m$ of real numbers is \emph{unimodal} if 
for some $0 \leq k \leq m$ we have 
\[
    a_0 \leq a_1 \leq \dots \leq a_{k-1} \leq a_k \geq a_{k+1} \geq \dots \geq a_{m-1} \geq a_m.
\]
Equivalently, $a_j \geq \min(a_{i}, a_{k})$ for all $ i<j < k$. A sequence $a_0, \dots, a_m$ of nonnegative  numbers is \textit{log-concave} if $a_i^2 \ge a_{i-1}a_{i+1}$ for every $0 < i < m$ and contains no internal zeros. It is well-known and easy to prove that log-concave sequences are unimodal.

A more precise observation is that a sequence $a_0, \dots, a_n$ of positive real numbers is log-concave if and only if the sequence $i \mapsto c^i a_i$ is unimodal for all constants $c > 0$. The aforementioned conjecture of Rota asserts that the sequence $W_0, \dots, W_r$ is unimodal. Given the observation above, a plausible approach to producing a counterexample is to start with a matroid for which log-concavity of $(W_i)_i$ fails and to construct from it a matroid with Whitney numbers $(c^i W_i)_i$, where $c$ is suitably chosen to break unimodality.

While the present authors are not aware of a construction on matroids that has precisely the above effect on flat counts, the $q$-lift construction introduced by Whittle in \cite{Whittle89} has almost this effect for $\mathbb F_q$-representable matroids. See \Cref{sec:q_lifts} for details. The numerical subtleties involved are resolved by ensuring that the failure of log-concavity in the starting matroid---as measured by the ratio $W_{k-1}W_{k+1}/W_k^2$---is sufficiently large. Since the $q$-lift construction takes as input an $\mathbb{F}_q$-realizable matroid, it is also important to us that Larson's counterexamples are regular matroids, which are realizable over all fields.

\section{Log-concavity}\label{sec:lcc}

For $t \ge 1$, let $G_t$ denote the graphic matroid of the generalized theta graph consisting of two endpoint vertices and four internally disjoint paths between them, three of length $t$ and one of length $1$. The matroid $G_t$ has $3t+1$ elements and rank $3t-2$. In \cite{Larson26}, the matroid $G_{26}$ was used to disprove Mason's conjecture, for which it was shown that
\[
    \frac{W_{75}\cdot W_{73}}{W_{74}^2} = \frac{18551\cdot 52954525}{983775^2} \approx 1.015.
\]
The computation there easily generalizes to show that this ratio can be made arbitrarily large, which will be used in the proof of Theorem \ref{thm:main}. We include the proof here for completeness and because we need a slightly more detailed statement. 

\begin{lemma}\label[lemma]{lemma:gap}
    For any $\rho > 1$, there exists a simple regular matroid $M$ and $3 < k < r(M)$ such that $W_{k-2} > W_{k-1} > W_k > W_{k+1}$ but $W_{k-1}W_{k+1}/W_k^2 > \rho$.
\end{lemma}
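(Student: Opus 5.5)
The plan is to generalize Larson's computation to a two-parameter family of generalized theta graphs. Let $\Theta_{n,t}$ be the graph with two endpoint vertices $u,v$, joined by $n$ internally disjoint paths of length $t$ and by a single edge $e$; for $n=3$ this is $G_t$. For $t\ge 2$ the graph has no loops or parallel edges, so its graphic matroid $M$ is simple and regular. Its rank is $r=n(t-1)+1$, and its corank in the sense of $|E|-r$ is $n$. We will look at the top of the flat sequence and set $k=r-2$, so that the four numbers in the statement are $W_r, W_{r-1}, W_{r-2}, W_{r-3}$ read backwards.

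\textbf{Step 1: exact formulas.} A flat of a graphic matroid of rank $r-j$ is the edge set induced by a partition of the vertex set into $j+1$ connected blocks. Equivalently, it is determined by the set $C$ of edges it deletes, provided $C$ is a union of bonds. In $\Theta_{n,t}$ such a set $C$ is described by two pieces of data:
\begin{itemize}[label=$\cdot$]
\item whether $u$ and $v$ are separated;
\item the numbers $a_1,\dots,a_n$ of edges of $C$ on the individual paths.
\end{itemize}
If $u,v$ are separated, then $e\in C$ and every $a_i\ge 1$, and the number of blocks is $2+\sum_i(a_i-1)$. If $u,v$ are not separated, then $e\notin C$, each $a_i$ is $0$ or at least $2$, and the number of blocks is $1+\sum_{a_i\ge 2}(a_i-1)$. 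For a fixed vector $(a_i)$ the number of choices of $C$ is $\prod_i\binom{t}{a_i}$. This gives closed forms for $W_{r-j}$ when $j\le 3$. For example, $W_{r-1}=t^n+n\binom t2$, which reproduces $18551$ at $(n,t)=(3,26)$. The expressions for $W_{r-2}$ and $W_{r-3}$ are sums of a few terms: all products of binomials with $\sum(a_i-1)$ equal to $1$ or $2$ in the first case, or to $2$ or $3$ in the second.

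\textbf{Step 2: choosing the regime (the main obstacle).} Fixing $n$ and letting $t\to\infty$ only produces a bounded ratio. Asymptotically
\[
W_{r-2}\sim \tfrac n2 t^{n+1},\qquad W_{r-3}\sim\Bigl(\tfrac n6+\tfrac{n(n-1)}8\Bigr)t^{n+2},
\]
so the ratio $W_{r-1}W_{r-3}/W_{r-2}^2$ tends to a constant. Larson's value $1.015$ therefore comes from competition between the ``separated'' and ``non-separated'' terms, and the main work is to find a regime in which this competition is unbalanced in our favour.

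I would try a coupled limit with $n\to\infty$ and $t$ chosen as a function of $n$ near the balance point $t^n\approx n\binom t2$. In this range the two contributions to $W_{r-1}$ are comparable. Among the non-separated terms, the $\binom n2\binom t2^2$ term in $W_{r-2}$ and the $\binom n3\binom t2^3$ term in $W_{r-3}$ carry extra powers of $n$ relative to their separated counterparts. This suggests the ratio grows roughly like $n$ or like a power of $t$.

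Concretely, I would write each of $W_{r-1},W_{r-2},W_{r-3}$ as $A_j+B_j$, the separated part plus the non-separated part. Then I would identify the dominant terms under a choice such as $t^{n}=\lambda\, n t^2$ with $\lambda$ a parameter, and compute the limit of the ratio as a function of $\lambda$ and $n$. Finally I would choose $\lambda$ so that this limit exceeds $\rho$. If this particular coupling does not work, the fallback is to leave $(n,t)$ free and optimize the explicit rational expression directly. Since the formulas are exact, any admissible pair can be certified numerically.

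\textbf{Step 3: the monotonicity conditions.} In the same regime I would check $W_r=1<W_{r-1}<W_{r-2}<W_{r-3}$, i.e.\ $W_{k-2}>W_{k-1}>W_k>W_{k+1}$ for $k=r-2$. Each inequality compares polynomials in $t$ whose degrees increase with $j$, and in the chosen regime these dominate the coefficient losses in $n$. Finally, $k=r-2>3$ as soon as $n(t-1)\ge 5$, which certainly holds for large parameters.
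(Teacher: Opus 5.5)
There is a genuine gap. Your Step 1 formulas are correct, and for $n=3$ they are exactly the paper's. The proposal breaks in Step 2, which rests on a false claim. Your asymptotics $W_{r-2}\sim\frac n2t^{n+1}$ and $W_{r-3}\sim(\frac n6+\frac{n(n-1)}8)t^{n+2}$ keep only the separated terms, and these are leading only when $n\ge5$.

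\emph{The case $n=3$.} Here the non-separated term $\binom n2\binom t2^2\sim\frac34t^4$ has the same order as the separated term of $W_{r-2}$. In $W_{r-3}$, the non-separated term $\binom n3\binom t2^3\sim\frac18t^6$ (the case $a_1=a_2=a_3=2$ with $u,v$ not separated) dominates all the other terms, which are $O(t^5)$. Hence $W_{r-1}\sim t^3$, $W_{r-2}\sim\frac94t^4$, $W_{r-3}\sim\frac18t^6$, and so $W_{r-1}W_{r-3}/W_{r-2}^2\sim\frac{2}{81}t\to\infty$. This is precisely the paper's proof: fix $n=3$, take $k=r-2$, and let $t\to\infty$.

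For general fixed $n\ge3$, the $t$-degree of $W_{r-j}$ jumps by $2$ at $j=n$, so the analogous choice would be $k=r-n+1$. With $k=r-2$ and fixed $n\ge4$ the ratio is indeed bounded.

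\emph{Your proposed regime cannot rescue the argument.} For integers $t\ge2$, the coupling $t^{n}=\lambda nt^2$ forces $\lambda=t^{n-2}/n\ge2^{n-2}/n\to\infty$, so no such regime exists with $\lambda$ fixed as $n\to\infty$. Along $t=2$, $n\to\infty$, the ratio actually tends to $\frac12$. The fallback of numerically certifying particular pairs $(n,t)$ can only handle finitely many values of $\rho$, while the lemma requires every $\rho>1$. So, as written, nothing in the proposal shows that the ratio is unbounded.

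\emph{Indexing.} Your indexing is off by one. With $k=r-2$, the four numbers are $W_{r-4},W_{r-3},W_{r-2},W_{r-1}$, not $W_r,\dots,W_{r-3}$. Step 3 must therefore also prove $W_{r-4}>W_{r-3}$. For $n=3$ this follows from the non-separated configurations whose cut counts are a permutation of $(3,2,2)$. These give $W_{r-4}\ge3\binom t3\binom t2^2=\Theta(t^7)$, against $W_{r-3}=\Theta(t^6)$.
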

\begin{proof}
    Let $t\ge 3$ be a positive integer to be chosen later. Let $M = G_t$, and let $k = r(M)-2 = 3t-4 > 3$. In the dual $M^*$, the four series classes of $M$ become parallel classes $P_0, P_1, P_2, P_3$, with $\abs{P_0}=1$ and $\abs{P_1}=\abs{P_2}=\abs{P_3}=t$. Simplifying these parallel classes yields the uniform matroid $U_{3,4}$, so each circuit of $M^*$ is either a two-element subset of one of $P_1, P_2, P_3$, or has size four and consists of one element from each parallel class. For every flat $F$ of $M$ of rank $0 \le j \le r(M)$, the corresponding cyclic set $S = E\setminus F$ of $M^*$ satisfies
    \[
        \abs{S}-r_{M^*}(S) = r(M) - r_M(F) = r(M) - j
    \]
    by the dual rank formula. We will refer to the quantity $\abs{S} - r_{M^*}(S)$ as the \emph{nullity} of $S$ (in $M^*$). By the equation above, computing $W_{k-2}, W_{k-1}, W_k$, and $W_{k+1}$ amounts to counting cyclic sets $S$ of $M^*$ of nullity $4,3,2,$ and $1$, respectively.

    If $S$ has nullity $1$, then $S$ is a circuit. By the characterization above,
    \[
        W_{k+1}=3\binom t2 + t^3 = \Theta(t^3).
    \]

    Next suppose that $S$ has nullity $2$. Since $r(M^*)=3$, we have $3\le |S|\le 5$. If $|S|=3$, then $S$ consists of three elements from one of $P_1,P_2,P_3$, for which there are $3\binom t3$ choices. If $|S|=4$, then $S$ consists of two elements from each of two among $P_1,P_2,P_3$, for which there are $3{\binom t2}^2$ choices. If $|S|=5$, then $S$ consists of one element from each of $P_0, P_1, P_2, P_3$ and one additional element from one of $P_1,P_2,P_3$. Here there are $3t^2\binom t2$ choices. Therefore,
    \[
        W_k = 3\binom t3 + 3{\binom t2}^2 + 3t^2\binom t2  = \Theta(t^4).
    \]

    Now suppose that $S$ has nullity $3$. Since $r(M^*)=3$, we have $4\le|S|\le6$. If $S$ does not contain $P_0$, then there are three possibilities for the content of $S$:
    \begin{itemize}
        \item four elements from one of $P_1, P_2, P_3$; or
        \item three elements from one of $P_1, P_2, P_3$ and two elements from another; or
        \item two elements from each of $P_1, P_2, P_3$.
    \end{itemize}
    If $S$ does contain $P_0$,  then it has rank $3$ because all circuits containing $P_0$ have rank $3$, and so $|S|=6$. In this case, numbers of elements chosen from $P_1, P_2, P_3$ are $3, 1, 1$ or $2, 2, 1$, up to permutation.  It follows that
    \[
        W_{k-1} = 3\binom t4 + 6\binom t3\binom t2 + {\binom t2}^3 + 3t^2 \binom t3 + 3t {\binom t2}^2 =\Theta(t^6).
    \]

    Finally, we give a lower bound on $W_{k-2}$.
    To get a cyclic set of nullity 4, we can choose three elements from one of $P_1, P_2, P_3$ and two elements from each of the other two. We therefore get $W_{k-2}=\Omega(t^7)$ via the inequality
    \[
        W_{k-2}\ge 3\binom t3{\binom t2}^2 = \Theta(t^7).
    \]
    
    Combining the four asymptotic estimates above, we see immediately that the inequality $W_{k-2}>W_{k-1}>W_k>W_{k+1}$ holds for all sufficiently large $t$. Moreover, the ratio $W_{k-1}W_{k+1}/W_k^2 = \Theta(t)$  tends to infinity in $t$, so is eventually greater than $\rho$. Since $M$ is graphic, it is also regular, which completes the proof. 
\end{proof}

\section{$q$-lifts}\label{sec:q_lifts}
Throughout this section, we let $q$ be a prime power and let $\mathbb F_q$ be the finite field of cardinality $q$. Let $M$ be a simple $\mathbb F_q$-representable matroid of rank $r$ on $n$ elements.
Fixing an $\mathbb F_q$-representation of $M$, we identify $E(M)$ with the set of vectors in this vector representation.
Embed $\mathbb F_q^r$ into $\mathbb F_q^{r+1} = \mathbb F_q\times \mathbb F_q^r$. We consider the matroid $M'$ consisting of the vector $a = (1, 0, \dots, 0) \in \mathbb F_q^{r+1}$ and the vectors  $(t,v)$, where $t$ runs over $\mathbb F_q$ and $v$ runs over $E(M)$.
This operation was introduced by Whittle \cite{Whittle89}, where $M'$ is called the \textit{$q$-lift} of $M$ and $a$ is called the \textit{apex}. 

The isomorphism class of $M'$ is independent of the choice of apex $a$ in the description above. By contrast, Oxley and Whittle \cite{OxleyWhittle00} established that different initial representations of $M$ can yield non-isomorphic $q$-lifts of $M$. Nevertheless, it was shown by Bonin and Qin \cite{BoninQin01} that any two $q$-lifts of $M$ have the same Tutte polynomial. We will see in what follows that they also share the same Whitney numbers of the second kind.

In \cref{lemma:q-lift}, we count the number of flats of any $q$-lift $M'$ of $M$ in terms of the number of flats of $M$. This appears as \cite[Lemma 4]{BoninQin01}, but we include a proof here for completeness. Recall that every flat in a linearly-realized matroid can be written as the intersection of the set of vectors representing its ground set with some linear subspace of the ambient vector space.

\begin{lemma}\label[lemma]{lemma:q-lift}
    If $M$ is a simple $\mathbb F_q$-representable matroid of rank $r$ and $M'$ is a $q$-lift of $M$, then for every $1\leq i\leq r$ we have
    \[
        W_i(M') = q^i W_i (M) + W_{i-1}(M).
    \]
\end{lemma}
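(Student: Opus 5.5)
Flats of $M'$ correspond bijectively to their linear spans. So we count subspaces $U \subseteq \mathbb F_q^{r+1}$ that are spanned by elements of $M'$, sorted by dimension $i$, and split them according to whether $a \in U$. Write $\pi:\mathbb F_q^{r+1}\to\mathbb F_q^r$ for the projection killing the first coordinate, so $\pi(t,v)=v$ and $\pi(a)=0$.

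\emph{Case $a\in U$.} Every element of $M'$ other than $a$ has the form $a$-translates of $(0,v)$, namely $(t,v)=t a+(0,v)$. Hence $U = \mathbb F_q a \oplus (0\times W)$, where $W=\pi(U)$. Here $U$ is spanned by elements of $M'$ exactly when $W$ is spanned by elements of $E(M)$, i.e.\ $W$ is the span of a flat of $M$. In that case $\dim W = i-1$. This gives a bijection between such flats of $M'$ of rank $i$ and flats of $M$ of rank $i-1$, contributing $W_{i-1}(M)$.

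\emph{Case $a\notin U$.} Now $\pi|_U$ is injective, so $W=\pi(U)$ has dimension $i$. Moreover, $U$ is the graph of a linear functional $\varphi:W\to\mathbb F_q$, i.e.\ $U=\{(\varphi(w),w): w\in W\}$. The elements of $M'$ lying in $U$ are exactly $(\varphi(v),v)$ for $v\in E(M)\cap W$. For $i\ge 1$, $U$ is spanned by elements of $M'$ iff $W$ is spanned by $E(M)\cap W$, i.e.\ $W$ is the span of a rank-$i$ flat $F$ of $M$. Conversely, given such a $W$, every functional $\varphi\in W^*$ yields a distinct subspace $U$ spanned by $\{(\varphi(v),v):v\in F\}$, and this $U$ avoids $a$. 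There are $q^i$ such functionals, contributing $q^iW_i(M)$.

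Summing the two cases gives the formula. The main point needing care is the bijection between flats and spanned subspaces. A flat $F'$ of $M'$ equals $E(M')\cap\Span F'$, and distinct flats have distinct spans. So counting flats amounts to counting subspaces spanned by ground-set vectors, and the rank equals the dimension.

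Simplicity of $M$ makes the identification of $E(M)$ with a set of distinct nonzero vectors legitimate. It also ensures that the vectors $(t,v)$ are distinct and nonzero, so $M'$ is a well-defined simple matroid.

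The only subtle step is checking, in the second case, that no further elements of $M'$ fall into $U$ beyond the lifts of $F$. This holds because $a\notin U$, and because the first coordinate of any element of $U$ over $v$ is forced to be $\varphi(v)$. Once that is settled, the count is independent of the chosen representation, as the paper remarks.
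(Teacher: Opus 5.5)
Your proposal is correct and follows essentially the same route as the paper: you split rank-$i$ flats of $M'$ by whether they contain the apex $a$, identify those containing $a$ with rank-$(i-1)$ flats of $M$ via $\pi$, and identify those avoiding $a$ with pairs consisting of a rank-$i$ flat $G$ of $M$ and a linear functional on $\Span G$ (via its graph). Phrasing the argument in terms of spanned subspaces $U$ and $W=\pi(U)$, rather than the paper's $G = E(M)\cap\pi(\Span F)$, is only a cosmetic reformulation.
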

\begin{proof}
Throughout the proof, we use $\Span(A)$ to denote the  linear subspace spanned by a set $A$ of vectors.
Denote by $ \pi: \mathbb F_q^{r+1}\rightarrow\mathbb F_q^r$  the projection  given by $\pi(t,v) = v$.

    Fix $i \ge 1$. We claim that any rank $i$ flat $F$ of $M'$ can be uniquely written in one of the following two forms, according to whether it contains the apex $a$ or not.

    \textit{Case 1}: $a\in F$, in which case there exists a unique flat $G$ in $M$ of rank $i-1$ such that
    \[
        F = \{a\}\cup \{(t,v): t\in\mathbb F_q,\, v\in G\}.
    \]
    This equality forces $G = E(M) \cap \pi(\Span(F))$, which gives uniqueness of $G$. Conversely, if we use this as the definition of $G$, we  see that $G$ is a flat. For this choice of $G$, the inclusion
    \[
        F \subseteq \{a\}\cup \{(t,v): t\in\mathbb F_q,\, v\in G\}.
    \]
    is clear. For the reverse inclusion, let  $(t, v)$ be in the right-hand side. Since $(t, v) \in \pi^{-1}(\pi(\Span(F))) = \Span F$, we have $(t, v) \in E(M') \cap \Span F = F$. Finally, from the formula for $F$, we have $\Span(F) = \Span(a) \oplus \Span(G)$, and therefore $r_{M}(G)=r_{M'}(F) -1=i-1$.

    \textit{Case 2}: $a\notin F$, in which case there exists a unique flat $G$ in $M$ of rank $i$ and a unique linear functional $\alpha: \Span (G)\rightarrow \mathbb F_q$ such that 
    \[
        F =  \{(\alpha(v),v): v\in G\}.
    \]
    Note that since $a\notin F$, we have $\pi|_{\Span(F)}$ is injective. We can therefore recover both $G$ and $\alpha$ from $F$, giving uniqueness.

    To establish existence, define $G  = E(M) \cap \pi(\Span(F))$
    and let $\alpha: \Span(G)\rightarrow \mathbb F_q $ be given by 
    \[
         \alpha (v) = \pi_0(\pi|_{\Span(F)}^{-1} (v))
    \]
    for all $v\in  \Span(G) $, where $\pi_0:\mathbb F_q^{r+1}\rightarrow\mathbb F_q$ is the projection to the first coordinate $\pi_0(t,v) = t$.
    By construction, $G$ is a flat of $M$. The non-degeneracy of $\pi|_{\Span (F)}$  shows that $r_M(G) = r_{M'}(F)=i$. Furthermore, we have
     \[
         \pi|_{\Span (F)}^{-1}(v) = (\alpha(v),v)
     \]
     for all $v\in \Span (G)$, showing that $G$ and $\alpha$ satisfy $F = \{(\alpha(v),v): v\in G\}$.
     
     Having understood all the flats of $M'$, we can now count them. By the above case analysis, a flat of $M'$ of rank $i$ is given (uniquely) by either a flat $G$ of $M$ of rank $i-1$ or else a pair $(G, \alpha)$ where $G$ is a rank $i$ flat of $M$ and $\alpha$ is a linear functional on $\Span(G)$. Let us note that, conversely, any given rank $i-1$ flat $G$ in $M$ defines a rank $i$ flat $F$ in $M'$ as in Case 1. Similarly, any pair $(G,\alpha)$ where $G$ is a rank $i$ flat of $M$ and $\alpha$ is a linear functional on $\Span(G)$ defines a rank $i$ flat $F$ in $M'$ as in Case 2. Since in the latter case there are $q^{r_{M}(G)} = q^i$ functionals on $\Span(G)$, we deduce the equation
     \[
        W_i(M') = W_{i-1}(M) + q^i W_i(M),
     \]
     as desired.
\end{proof}

\section{Proof of the main theorem}\label{sec:proof_of_main}

\begin{proof}[Proof of Theorem~\ref{thm:main}]
    Let $M$ be any matroid satisfying the conditions in \Cref{lemma:gap} for the value $\rho = 4$. In particular, $M$ satisfies $W_{k-2} > W_{k-1} > W_k > W_{k+1}$ and $W_{k-1}W_{k+1}/W_k^2 > 4$ for some $3 < k < r(M)$. It follows that
    \[
        \frac{W_{k-1}}{W_k} > 4\cdot \frac{W_k}{W_{k+1}} > 4.
    \]
    By Bertrand's postulate (see, e.g., \cite[Theorem 418]{HardyWright08}),  there exists a prime number $p$  such that
    \[
         p\in \left(\frac12\cdot\frac{W_{k-1}}{W_k},\frac{W_{k-1}}{W_k}\right).
    \]
    Since $M$ is regular, it is $\mathbb F_p$-representable, and hence we may choose a $p$-lift $M'$ of $M$. We write $W_i'$ for the number of flats of rank $i$ in $M'$. We now show that the differences $W_{k-1}' - W_k'$ and $W_{k+1}' - W_k'$ are both positive. By \cref{lemma:q-lift}, we have
    \[
        W_{k-1}'-W_k' = p^{k-1}(W_{k-1}-pW_k) + W_{k-2} - W_{k-1}.
    \]
    By our choice of $p$, we have $W_{k-1} > pW_k$. Since $W_{k-2} > W_{k-1}$, it follows that $W_{k-1}'-W_k' > 0$. For the second difference, note that our choice of $p$ implies $p > \frac{1}{2} \cdot \frac{W_{k-1}}{W_k} > 2 \cdot \frac{W_k}{W_{k+1}}$, so by \cref{lemma:q-lift} we have
    \begin{align*}
        W_{k+1}' - W_k' &= p^{k}(p W_{k+1} - W_k) + W_{k} - W_{k-1}\\
        &> p^k(2W_k - W_k) + W_k - 2p W_k \\
        &= (p^k - 2p + 1)W_k > 0.
    \end{align*}
    Therefore, $W_{k-1}' > W_k'$ and $W_k' <  W_{k+1}'$, so the sequence of Whitney numbers of the second kind in $M'$ is not unimodal.
\end{proof}

\newpage

\subsection*{How the counterexamples were found.} The starting point of this paper was an attempt to study the convexity of $(W_i^{-1})_i$, which is weaker than log-concavity but stronger than unimodality. We used ChatGPT 5.6 Pro to search for a counterexample to this weaker property. Based on a suggestion of the authors, ChatGPT 5.6 Pro returned a non-convex example built using a $q$-lift. We quickly realized this construction could in fact be adapted and generalized to give counterexamples to unimodality. The content of the present paper was written entirely by the authors.

\subsection*{Acknowledgments.} The authors are grateful to Matt Larson and Paul Seymour for helpful discussions. Part of this work was carried out at the 2026 \emph{Combinatorics at the Confluence} conference. The authors are grateful to the organizers and to Carnegie Mellon University for their hospitality.

\bibliographystyle{alpha}
\bibliography{references}

@article {Matsumoto,
    AUTHOR = {Matsumoto, Yoshitake and Moriyama, Sonoko and Imai, Hiroshi
              and Bremner, David},
     TITLE = {Matroid enumeration for incidence geometry},
   JOURNAL = {Discrete Comput. Geom.},
  FJOURNAL = {Discrete \& Computational Geometry. An International Journal
              of Mathematics and Computer Science},
    VOLUME = {47},
      YEAR = {2012},
    NUMBER = {1},
     PAGES = {17--43},
      ISSN = {0179-5376,1432-0444},
   MRCLASS = {05B35 (51A99 52C35 52C40)},
  MRNUMBER = {2886089},
MRREVIEWER = {Winfried\ Hochst\"attler},
       DOI = {10.1007/s00454-011-9388-y},
       URL = {https://doi.org/10.1007/s00454-011-9388-y},
}

@misc{Larson26,
      title={Counterexamples to two conjectures about matroids}, 
      author={Matt Larson},
      year={2026},
      eprint={2607.02208},
      archivePrefix={arXiv},
      primaryClass={math.CO},
      url={https://arxiv.org/abs/2607.02208}, 
}

@incollection {rota70,
    AUTHOR = {Rota, Gian-Carlo},
     TITLE = {Combinatorial theory, old and new},
 BOOKTITLE = {Actes du {C}ongr\`es {I}nternational des {M}ath\'ematiciens
              ({N}ice, 1970), {T}ome 3},
     PAGES = {229--233},
 PUBLISHER = {Gauthier-Villars \'Editeur, Paris},
      YEAR = {1971},
   MRCLASS = {05B25},
  MRNUMBER = {505646},
}

@incollection {heron1972matroid,
    AUTHOR = {Heron, A. P.},
     TITLE = {Matroid polynomials},
 BOOKTITLE = {Combinatorics ({P}roc. {C}onf. {C}ombinatorial {M}ath.,
              {M}ath. {I}nst., {O}xford, 1972)},
     PAGES = {164--202},
 PUBLISHER = {Inst. Math. Appl., Southend-on-Sea},
      YEAR = {1972},
   MRCLASS = {05B35},
  MRNUMBER = {340058},
MRREVIEWER = {Ann\ Miller},
}

@book {welsh1976matroid,
    AUTHOR = {Welsh, D. J. A.},
     TITLE = {Matroid theory},
    SERIES = {L. M. S. Monographs},
    VOLUME = {No. 8},
 PUBLISHER = {Academic Press [Harcourt Brace Jovanovich, Publishers],
              London-New York},
      YEAR = {1976},
     PAGES = {xi+433},
   MRCLASS = {05B35},
  MRNUMBER = {427112},
MRREVIEWER = {W.\ T.\ Tutte},
}

@article {anari2024logconcave,
    AUTHOR = {Anari, Nima and Liu, Kuikui and Oveis Gharan, Shayan and
              Vinzant, Cynthia},
     TITLE = {Log-concave polynomials {III}: {M}ason's ultra-log-concavity
              conjecture for independent sets of matroids},
   JOURNAL = {Proc. Amer. Math. Soc.},
  FJOURNAL = {Proceedings of the American Mathematical Society},
    VOLUME = {152},
      YEAR = {2024},
    NUMBER = {5},
     PAGES = {1969--1981},
      ISSN = {0002-9939,1088-6826},
   MRCLASS = {05B35 (05A20 52A41)},
  MRNUMBER = {4728467},
       DOI = {10.1090/proc/16724},
       URL = {https://doi.org/10.1090/proc/16724},
}

@incollection {mason72,
    AUTHOR = {Mason, J. H.},
     TITLE = {Matroids: unimodal conjectures and {M}otzkin's theorem},
 BOOKTITLE = {Combinatorics ({P}roc. {C}onf. {C}ombinatorial {M}ath.,
              {M}ath. {I}nst., {O}xford, 1972)},
     PAGES = {207--220},
 PUBLISHER = {Inst. Math. Appl., Southend-on-Sea},
      YEAR = {1972},
   MRCLASS = {05B35},
  MRNUMBER = {349445},
MRREVIEWER = {W.\ Dorfler},
}

@book {Oxley,
    AUTHOR = {Oxley, James},
     TITLE = {Matroid theory},
    SERIES = {Oxford Graduate Texts in Mathematics},
    VOLUME = {21},
   EDITION = {Second},
 PUBLISHER = {Oxford University Press, Oxford},
      YEAR = {2011},
     PAGES = {xiv+684},
      ISBN = {978-0-19-960339-8},
   MRCLASS = {05-01 (05B35 90C27)},
  MRNUMBER = {2849819},
MRREVIEWER = {Maruti\ M.\ Shikare},
       DOI = {10.1093/acprof:oso/9780198566946.001.0001},
       URL = {https://doi.org/10.1093/acprof:oso/9780198566946.001.0001},
}

@book{White,
    editor={Neil White},
    place={Cambridge},
    series={Encyclopedia of Mathematics and its Applications},
    title={Theory of Matroids}, 
    publisher={Cambridge University Press},
    year={1986},
    collection={Encyclopedia of Mathematics and its Applications}
}

@article {Whittle89,
    AUTHOR = {Whittle, Geoff},
     TITLE = {{$q$}-lifts of tangential {$k$}-blocks},
   JOURNAL = {J. London Math. Soc. (2)},
  FJOURNAL = {Journal of the London Mathematical Society. Second Series},
    VOLUME = {39},
      YEAR = {1989},
    NUMBER = {1},
     PAGES = {9--15},
      ISSN = {0024-6107,1469-7750},
   MRCLASS = {51E20 (05B25 05B35)},
  MRNUMBER = {989915},
MRREVIEWER = {Ulrich\ Faigle},
       DOI = {10.1112/jlms/s2-39.1.9},
       URL = {https://doi.org/10.1112/jlms/s2-39.1.9},
}

@article {OxleyWhittle00,
    AUTHOR = {Oxley, James and Whittle, Geoff},
     TITLE = {On the non-uniqueness of {$q$}-cones of matroids},
   JOURNAL = {Discrete Math.},
  FJOURNAL = {Discrete Mathematics},
    VOLUME = {218},
      YEAR = {2000},
    NUMBER = {1-3},
     PAGES = {271--275},
      ISSN = {0012-365X,1872-681X},
   MRCLASS = {05B35},
  MRNUMBER = {1754341},
       DOI = {10.1016/S0012-365X(99)00358-1},
       URL = {https://doi.org/10.1016/S0012-365X(99)00358-1},
}

@book {HardyWright08,
    AUTHOR = {Hardy, G. H. and Wright, E. M.},
     TITLE = {An introduction to the theory of numbers},
   EDITION = {Fourth},
 PUBLISHER = {Oxford University Press, Oxford},
      YEAR = {1960},
      ISBN = {0-19-853310-7},
}

@article {BoninQin01,
    AUTHOR = {Bonin, Joseph E. and Qin, Hongxun},
     TITLE = {Tutte polynomials of {$q$}-cones},
   JOURNAL = {Discrete Math.},
  FJOURNAL = {Discrete Mathematics},
    VOLUME = {232},
      YEAR = {2001},
    NUMBER = {1-3},
     PAGES = {95--103},
      ISSN = {0012-365X,1872-681X},
   MRCLASS = {05B35},
  MRNUMBER = {1823624},
       DOI = {10.1016/S0012-365X(00)00343-5},
       URL = {https://doi.org/10.1016/S0012-365X(00)00343-5},
}

@incollection {aigner87,
    AUTHOR = {Aigner, Martin},
     TITLE = {Whitney numbers},
 BOOKTITLE = {Combinatorial geometries},
    SERIES = {Encyclopedia Math. Appl.},
    VOLUME = {29},
     PAGES = {139--160},
 PUBLISHER = {Cambridge Univ. Press, Cambridge},
      YEAR = {1987},
      ISBN = {0-521-33339-3},
   MRCLASS = {05B35},
  MRNUMBER = {921072},
}

@incollection {stanley00,
    AUTHOR = {Stanley, Richard P.},
     TITLE = {Positivity problems and conjectures in algebraic
              combinatorics},
 BOOKTITLE = {Mathematics: frontiers and perspectives},
     PAGES = {295--319},
 PUBLISHER = {Amer. Math. Soc., Providence, RI},
      YEAR = {2000},
      ISBN = {0-8218-2070-2},
   MRCLASS = {05-02 (05E05 26C10 52B05)},
  MRNUMBER = {1754784},
MRREVIEWER = {Timothy\ Y.\ Chow},
}

@book {Y70,
    AUTHOR = {Young, Hobart Peyton},
     TITLE = {Equicardinal Matroids and Matroid Designs},
      NOTE = {Thesis (Ph.D.)--University of Michigan},
 PUBLISHER = {ProQuest LLC, Ann Arbor, MI},
      YEAR = {1970},
     PAGES = {212},
   MRCLASS = {99-05},
  MRNUMBER = {2620412},
       URL =
              {http://gateway.proquest.com/openurl?url_ver=Z39.88-2004&rft_val_fmt=info:ofi/fmt:kev:mtx:dissertation&res_dat=xri:pqdiss&rft_dat=xri:pqdiss:7115351},
}

@article {seymour,
    AUTHOR = {Seymour, P. D.},
     TITLE = {On the points-lines-planes conjecture},
   JOURNAL = {J. Combin. Theory Ser. B},
  FJOURNAL = {Journal of Combinatorial Theory. Series B},
    VOLUME = {33},
      YEAR = {1982},
    NUMBER = {1},
     PAGES = {17--26},
      ISSN = {0095-8956,1096-0902},
   MRCLASS = {05B35},
  MRNUMBER = {678168},
MRREVIEWER = {John\ H.\ Mason},
       DOI = {10.1016/0095-8956(82)90054-5},
       URL = {https://doi.org/10.1016/0095-8956(82)90054-5},
}

@article {Stone75,
    AUTHOR = {Stonesifer, J. Randolph},
     TITLE = {Logarithmic concavity for edge lattices of graphs},
   JOURNAL = {J. Combinatorial Theory Ser. A},
  FJOURNAL = {Journal of Combinatorial Theory. Series A},
    VOLUME = {18},
      YEAR = {1975},
     PAGES = {36--46},
      ISSN = {0097-3165},
   MRCLASS = {05B35},
  MRNUMBER = {376407},
MRREVIEWER = {J.\ A.\ Bondy},
       DOI = {10.1016/0097-3165(75)90064-3},
       URL = {https://doi.org/10.1016/0097-3165(75)90064-3},
}

@article {Harper,
    AUTHOR = {Harper, L. H.},
     TITLE = {Stirling behavior is asymptotically normal},
   JOURNAL = {Ann. Math. Statist.},
  FJOURNAL = {Annals of Mathematical Statistics},
    VOLUME = {38},
      YEAR = {1967},
     PAGES = {410--414},
      ISSN = {0003-4851},
   MRCLASS = {60.10},
  MRNUMBER = {211432},
MRREVIEWER = {R.\ Fischler},
       DOI = {10.1214/aoms/1177698956},
       URL = {https://doi.org/10.1214/aoms/1177698956},
}

@article {lieb,
    AUTHOR = {Lieb, Elliott H.},
     TITLE = {Concavity properties and a generating function for {S}tirling
              numbers},
   JOURNAL = {J. Combinatorial Theory},
  FJOURNAL = {Journal of Combinatorial Theory},
    VOLUME = {5},
      YEAR = {1968},
     PAGES = {203--206},
      ISSN = {0021-9800},
   MRCLASS = {05.10},
  MRNUMBER = {230635},
MRREVIEWER = {Bernard\ Harris},
}

@article {Huh1,
    AUTHOR = {Huh, June and Wang, Botong},
     TITLE = {Enumeration of points, lines, planes, etc.},
   JOURNAL = {Acta Math.},
  FJOURNAL = {Acta Mathematica},
    VOLUME = {218},
      YEAR = {2017},
    NUMBER = {2},
     PAGES = {297--317},
      ISSN = {0001-5962,1871-2509},
   MRCLASS = {05A15 (52C10)},
  MRNUMBER = {3733101},
MRREVIEWER = {Michael\ J.\ Falk},
       DOI = {10.4310/ACTA.2017.v218.n2.a2},
       URL = {https://doi.org/10.4310/ACTA.2017.v218.n2.a2},
}

@article{huh2,
  author  = {Braden, Tom and Huh, June and Matherne, Jacob P. and Proudfoot, Nicholas and Wang, Botong},
  title   = {Singular {Hodge} theory for combinatorial geometries},
  journal = {Journal of the American Mathematical Society},
  note    = {To appear},
}

@misc{AHL25,
      title={A decomposition theorem for Lefschetz modules}, 
      author={Omid Amini and June Huh and Matt Larson},
      year={2025},
      eprint={2511.02026},
      archivePrefix={arXiv},
      primaryClass={math.AG},
      url={https://arxiv.org/abs/2511.02026}, 
}

@article {huh12a,
    AUTHOR = {Huh, June},
     TITLE = {Milnor numbers of projective hypersurfaces and the chromatic
              polynomial of graphs},
   JOURNAL = {J. Amer. Math. Soc.},
  FJOURNAL = {Journal of the American Mathematical Society},
    VOLUME = {25},
      YEAR = {2012},
    NUMBER = {3},
     PAGES = {907--927},
      ISSN = {0894-0347,1088-6834},
   MRCLASS = {14B05 (05B35 14C17)},
  MRNUMBER = {2904577},
MRREVIEWER = {Paolo\ Aluffi},
       DOI = {10.1090/S0894-0347-2012-00731-0},
       URL = {https://doi.org/10.1090/S0894-0347-2012-00731-0},
}

@article {huh12b,
    AUTHOR = {Huh, June and Katz, Eric},
     TITLE = {Log-concavity of characteristic polynomials and the {B}ergman
              fan of matroids},
   JOURNAL = {Math. Ann.},
  FJOURNAL = {Mathematische Annalen},
    VOLUME = {354},
      YEAR = {2012},
    NUMBER = {3},
     PAGES = {1103--1116},
      ISSN = {0025-5831,1432-1807},
   MRCLASS = {05B35},
  MRNUMBER = {2983081},
MRREVIEWER = {Talmage\ J.\ Reid},
       DOI = {10.1007/s00208-011-0777-6},
       URL = {https://doi.org/10.1007/s00208-011-0777-6},
}

@article {Huh18,
    AUTHOR = {Adiprasito, Karim and Huh, June and Katz, Eric},
     TITLE = {Hodge theory for combinatorial geometries},
   JOURNAL = {Ann. of Math. (2)},
  FJOURNAL = {Annals of Mathematics. Second Series},
    VOLUME = {188},
      YEAR = {2018},
    NUMBER = {2},
     PAGES = {381--452},
      ISSN = {0003-486X,1939-8980},
   MRCLASS = {05B35 (05E99 14C25 14T05)},
  MRNUMBER = {3862944},
MRREVIEWER = {Zvi\ Rosen},
       DOI = {10.4007/annals.2018.188.2.1},
       URL = {https://doi.org/10.4007/annals.2018.188.2.1},
}

@article {huh20,
    AUTHOR = {Br\"and\'en, Petter and Huh, June},
     TITLE = {Lorentzian polynomials},
   JOURNAL = {Ann. of Math. (2)},
  FJOURNAL = {Annals of Mathematics. Second Series},
    VOLUME = {192},
      YEAR = {2020},
    NUMBER = {3},
     PAGES = {821--891},
      ISSN = {0003-486X,1939-8980},
   MRCLASS = {52B40 (05A20 14T15)},
  MRNUMBER = {4172622},
MRREVIEWER = {Trygve\ Johnsen},
       DOI = {10.4007/annals.2020.192.3.4},
       URL = {https://doi.org/10.4007/annals.2020.192.3.4},
}

@incollection {Knuth,
    AUTHOR = {Knuth, Donald E.},
     TITLE = {Big omicron and big omega and big theta (1976)},
 BOOKTITLE = {Ideas that created the future---classic papers of computer
              science},
     PAGES = {441--446},
 PUBLISHER = {MIT Press, Cambridge, MA},
      YEAR = {2021},
      ISBN = {9780262045308},
   MRCLASS = {68Q25},
  MRNUMBER = {4679197},
}

\end{document}